\newtheorem{thm}{Theorem}
\newenvironment{thm'}[1]{%
	\manualtheoreminner
}{\endmanualtheoreminner}
\newtheorem{lem}[thm]{Lemma}
\newtheorem{claim}[thm]{Claim}
\newtheorem{conj}[thm]{Conjecture}
\theoremstyle{remark}
\DeclareMathSymbol{\lsb@l}{\mathalpha}{letters}{`l}
\def\cS{\mathcal{S}}
\def\blfootnote{\gdef\@thefnmark{}\@footnotetext}
\begin{document}

\title{The number of tangencies between two families of curves}
\author{Bal\'azs Keszegh\thanks{Alfréd Rényi Institute of Mathematics and ELTE Eötvös Loránd University, MTA-ELTE Lendület Combinatorial Geometry Research Group, Budapest, Hungary. Research supported by the Lend\"ulet program of the Hungarian Academy of Sciences (MTA), under the grant LP2017-19/2017, by the J\'anos Bolyai Research Scholarship of the Hungarian Academy of Sciences, by the National Research, Development and Innovation Office -- NKFIH under the grant K 132696 and FK 132060 and by the ÚNKP-20-5 New National Excellence Program of the Ministry for Innovation and Technology from the source of the National Research, Development and Innovation Fund.}
\and D\"om\"ot\"or P\'alv\"olgyi\thanks{MTA-ELTE Lend\"ulet Combinatorial Geometry Research Group, Institute of Mathematics, E\"otv\"os Lor\'and University, Budapest, Hungary. Research supported by the Lend\"ulet program of the Hungarian Academy of Sciences (MTA), under the grant LP2017-19/2017.}
}
\maketitle

\begin{abstract}
We prove that the number of tangencies between the members of two families, each of which consists of $n$ pairwise disjoint curves, can be as large as $\Omega(n^{4/3})$. We show that from a conjecture about forbidden $0$-$1$ matrices it would follow that this bound is sharp for doubly-grounded families.
We also show that if the curves are required to be $x$-monotone, then the maximum number of tangencies is $\Theta(n\log n)$, which improves a result by Pach, Suk, and Treml.
Finally, we also improve the best known bound on the number of tangencies between the members of a family of at most $t$-intersecting curves.
\end{abstract}

\section{Introduction}

In this paper, we study the maximum number of tangencies between the members of two families, each of which consists of pairwise disjoint curves.
Pach, Suk, and Treml~\cite{Treml} attributes to Pinchasi and Ben-Dan (personal communication) the first result about this problem, who proved that the maximum number of such tangencies among $n$ curves is $O(n^{3/2} \log n)$.
Their proof is based on a theorem of Marcus and Tardos \cite{marcustardosseq} and of Pinchasi and Radoi\v ci\'c \cite{romrados}; see also the discussion after Claim \ref{claim:p2}.
Moreover, Pinchasi and Ben-Dan suggested that the correct order of magnitude of the maximum may be linear in $n$. 

Pach, Suk, and Treml~\cite{Treml} proved this conjecture in the special case where both families consist of closed convex regions instead of arbitrary curves. 
Ackerman \cite{Ackerman2013} have improved the multiplicative constant from $8$ to $6$ for this case, which is asymptotically optimal.

On the other hand, Pach, Suk, and Treml~\cite{Treml} refuted this conjecture in general by showing an example that the number of tangencies may be more than linear.
More precisely, they constructed two families of $n$ pairwise disjoint $x$-monotone curves with $\Omega(n\log n)$ tangencies between the two families.
They have also shown an upper bound of $O(n\log^2 n)$ for $x$-monotone curves.
Our first result determines the exact order of magnitude for $x$-monotone curves, by giving the following improved upper bound, matching the previous lower bound.

\begin{thm}\label{thm:xmon}
	Given a family of $n$ red and blue $x$-monotone curves such that no two curves of the same color intersect, the number of tangencies between the curves is $O(n\log n)$.
\end{thm}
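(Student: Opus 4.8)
The plan is a divide-and-conquer on the $x$-coordinate that reduces everything to a purely combinatorial question about curves crossing a common vertical line.

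\textbf{Reductions.} We begin by perturbing the curves to reach general position: no three curves through a point, no two sharing an endpoint, all crossings transversal, and no two of the relevant features (endpoints, crossings, tangencies) sharing an $x$-coordinate; in particular all intersections are isolated, so the count is finite. At a tangency the two curves touch without crossing, so near the touching point one of them lies weakly below the other; reflecting the whole picture in a horizontal line swaps these two possibilities, so up to a factor two it suffices to bound the tangencies at which the red curve locally lies below the blue one.

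\textbf{Divide and conquer.} Let $\ell$ be a vertical line through the median $x$-coordinate of the $2n$ endpoints; a count of endpoints on the two sides shows that at most $n/2$ curves lie entirely to the left of $\ell$ and at most $n/2$ entirely to the right, the remaining set $M$ consisting of curves that cross $\ell$. Since $\ell$ meets no tangency, every tangency lies strictly on one side of $\ell$ and is either between two curves lying entirely on that side — handled by recursing on $\le n/2$ curves — or is incident to a curve of $M$. We iterate this with a segment-tree–style hierarchical family of vertical cuts (a curve crossing a cut being passed, truncated, to both sides) and charge each tangency to the coarsest strip whose width is spanned by both of its curves; since each curve spans only $O(\log n)$ strips of the hierarchy, it is then enough to show that within a single strip the number of tangencies incident to a curve crossing that strip's cut is \emph{linear} in the number of such crossing curves. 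Summing over the $O(\log n)$ levels would then give $O(n\log n)$. This book-keeping is routine but has to be arranged so as not to leak the extra logarithm.

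\textbf{The combinatorial core.} It remains to bound the number of tangencies near a fixed vertical line $\ell$ that are incident to a curve crossing $\ell$. All curves involved meet $\ell$, hence are simultaneously present there; being pairwise disjoint within each colour, the red ones among them therefore carry a single fixed vertical order, and likewise the blue ones, and as $x$ varies only the red–blue interleaving changes — by an adjacent red–blue transposition, a genuine one at a crossing and one that instantly reverses itself at a tangency. Recording the interleaving at each $x$ as a monotone staircase in the red-by-blue grid, a tangency becomes a grid cell at which the staircase momentarily touches a corner and retreats, and disjointness within each colour forces the set of such cells, viewed as a $0$–$1$ matrix, to avoid certain fixed submatrix patterns. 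A forbidden-pattern (Marcus–Tardos–type) estimate — the same circle of ideas underlying the earlier $O(n\log^2 n)$ bound — then controls the number of these cells; the point is to push it to a linear bound.

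\textbf{Main obstacle.} The crux is exactly this last step. For the recursion to yield $O(n\log n)$ rather than the known $O(n\log^2 n)$, the tangencies at a cut must be bounded \emph{linearly}, not just by $O(m\log m)$, in the number $m$ of curves crossing the cut; equivalently, the number of ``touch-and-retreat'' cells in the fixed-per-colour interleaving model must be $O(m)$. Proving this sharp combinatorial bound — as opposed to the easy $O(m\log m)$ that a routine forbidden-submatrix argument delivers — is the heart of the proof, while the reductions and the divide-and-conquer wrapped around it are standard.
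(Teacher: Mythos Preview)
Your proposal is not a proof but a reduction to an unproved lemma which you yourself flag as the ``main obstacle'': that among $m$ curves all crossing a fixed vertical cut, the number of one-sided tangencies is $O(m)$. Without that lemma the divide-and-conquer scheme is exactly the Pach--Suk--Treml argument and yields only $O(n\log^2 n)$; you offer no argument for the linear bound beyond naming it ``the heart of the proof''.

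In fact the lemma is false in the generality you need. Take any family realising the $\Omega(n\log n)$ lower bound and extend every curve to the left and to the right so that all of them are defined on one common $x$-interval. This can be done colour by colour: the pairwise disjoint red $x$-monotone curves carry a consistent partial height order, complete it to a total order, and thread each extension through the slot that this order dictates, keeping the reds pairwise disjoint; do the same for blue. After the extension every curve crosses the median cut, so at the very first step of your recursion $|M|=n$, yet all $\Omega(n\log n)$ tangencies are ``incident to a curve crossing that strip's cut''. No refinement of the book-keeping rescues this: once all curves span the whole picture, there is nothing for the hierarchy to split on.

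The paper avoids divide-and-conquer altogether. It splits the tangencies into the two local types just as you do, orders those of one type by the $x$-coordinate of the touching point, and then shows by a short planar argument that the resulting edge-ordered tangency graph contains no path on five vertices $a,b,c,d,e$ whose edges satisfy $ab<cd<bc<de$. A theorem of Gerbner et al.\ on edge-ordered graphs avoiding exactly this pattern then gives $O(n\log n)$ edges directly. So the single logarithm comes from a black-box extremal result on edge-ordered graphs, not from a recursion on the $x$-axis.
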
	

Having a closer look at the lower bound construction, one can notice that the two families together form an at most $2$-intersecting family, i.e., any pair of red and blue curves intersects at most twice.
This raised the question if assuming that the family is at most $1$-intersecting leads to a better upper bound.
Ackerman et nos~\cite{ackerman2021tangencies} proved that this is indeed the case, i.e., given an at most $1$-intersecting family of $n$ red and blue curves such that no two curves of the same color intersect, the number of tangencies between the curves is $O(n)$.
Note that we do not need to assume $x$-monotonicity and that it is trivial to construct an example with $\Omega(n)$ tangencies.

\smallskip
For general (not $x$-monotone) curves our main result is a construction which improves the previous $\Omega(n\log n)$ lower bound considerably.
Our lower bound construction has the special property that it contains a red curve and a blue curve that each touch all curves of the other color. Alternately, the construction can be realized such that all curves lie within a vertical strip, every red curve touches the left boundary of the strip and every blue curve touches the right boundary of the strip. We call such a family of curves a \textit{doubly-grounded family}. 

\begin{thm}\label{thm:main}
	There exists a family of $n$ red and blue curves such that no two curves of the same color intersect and the number of tangencies between the curves is $\Omega(n^{4/3})$.
	Moreover, the family can be doubly-grounded.
\end{thm}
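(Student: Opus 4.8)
The exponent $4/3=1+\tfrac13$ suggests balancing two scales, and the plan is to realise, by pairwise non\nobreakdash-crossing curves, the ``staircase'' bipartite tangency pattern: $r=\lfloor n^{1/3}\rfloor$ red curves $R_1,\dots,R_r$, together with $\approx n$ blue curves split into $r$ batches $\mathcal B_1,\dots,\mathcal B_r$ of size $s\approx n^{2/3}$ each, arranged so that every blue curve of batch $\mathcal B_i$ is tangent to exactly $R_1,\dots,R_i$ (and to no other red curve). Then the number of tangencies is $\sum_{i=1}^r s\cdot i=\Theta(sr^2)=\Theta(n^{4/3})$ while the number of curves is $r+sr=\Theta(n)$, so this would prove the bound (after rescaling $s$ so that the total is exactly $n$). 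The whole picture is placed inside a vertical strip $[0,1]\times\mathbb{R}$, which will make it doubly-grounded.

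First I would fix the red curves: let $R_i$ be a nearly-horizontal simple arc from $(0,i)$ on the left wall to the point $(\tfrac12,i)$, for $i=1,\dots,r$. These are pairwise disjoint, each touches the left wall, and — the key structural feature — they all terminate at the same $x$-coordinate $\tfrac12$. Hence the left half-strip $[0,\tfrac12]\times\mathbb{R}$ is subdivided into $r+1$ horizontal ``lanes'' (lane $k$ being the region between $R_k$ and $R_{k+1}$, with lane $0$ below $R_1$), while the right half-strip $[\tfrac12,1]\times\mathbb{R}$ is a ``free zone'' containing no red curve at all; in particular one may move vertically inside the free zone from any lane to any other without crossing a red curve.

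Next, the blue curves. A curve of batch $\mathcal B_i$ is a simple arc with both endpoints on the right wall that ``climbs'': from the right wall it enters lane $0$, bumps upward to touch $R_1$ tangentially from below, returns to the free zone, rises into lane $1$, bumps up to touch $R_2$, and so on; in lane $i-1$ it touches $R_i$ and then goes back to the right wall, never rising above height $i$. Thus it touches precisely $R_1,\dots,R_i$, and since it meets every red only from below and always changes lanes through the free zone (going around the common endpoint $(\tfrac12,\cdot)$), it never crosses a red curve, only touches it. To keep the blue curves pairwise non-crossing I would prescribe their $2n$ endpoints on the right wall as a nested family of ``brackets'': the brackets of $\mathcal B_i$ nested strictly inside those of $\mathcal B_{i+1}$ (a higher batch reaches a higher lane, so must enclose the lower ones), and within a batch nested in an arbitrary linear order. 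Then each lane, and the free zone, is traversed by the relevant blue arcs as a planar system of nested bumps and monotone connectors, which is realisable because a lane is a topological strip and can accommodate arbitrarily many non-crossing bumps attached to its boundary.

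Assembling: each curve of $\mathcal B_i$ contributes $i$ tangencies, giving $\sum_{i=1}^r s\cdot i=\Theta(n^{4/3})$ in total among $\Theta(n)$ curves; the construction is doubly-grounded by design, and moreover $R_1$ touches every blue curve while any curve of $\mathcal B_r$ touches every red curve, which also delivers the non-grounded variant mentioned in the statement. The main obstacle is precisely the realisation claimed at the end of the previous paragraph: one must verify carefully that the $n$ climbing blue arcs can be drawn \emph{simultaneously} simple, pairwise non-crossing, tangent to (not crossing) the intended reds $R_1,\dots,R_i$, and disjoint from the unintended reds $R_{i+1},\dots,R_r$. This is a purely topological routing argument, and its feasibility rests entirely on the two structural features isolated above — all red curves sharing the right endpoint height-wise at $x=\tfrac12$, and the nested bracket order on the right wall — so I expect the bulk of the write-up to consist of an explicit, coordinate-level description making these mutually compatible (for instance, ordering the bumps in each lane consistently with the global bracket nesting).
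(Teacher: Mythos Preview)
Your routing argument cannot be completed: the tangency pattern you aim for is \emph{unrealisable} in a doubly-grounded family.  The reason is that for any doubly-grounded family the bipartite tangency graph is $C_4$-free --- this is exactly what the paper proves on the way to the upper bound in Theorem~\ref{thm:doubly-grounded} (any $C_4$ in the redrawing $D$ is self-crossing because its vertices alternate between the two walls, while Claim~\ref{claim:p2} forbids self-crossing $C_4$'s).  Your staircase pattern, however, is full of $C_4$'s: any two blue curves in a batch $\mathcal B_i$ with $i\ge 2$, together with $R_1$ and $R_2$, form one.  So the ``purely topological routing'' you defer is not a technicality --- it is impossible.

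Concretely, take two disjoint blue arcs $B,B'$ with endpoints on the right wall, $B$ nested inside $B'$, and suppose $B'$ is tangent to $R_1$.  Then $B'$ together with the right-wall segment between its endpoints bounds a Jordan region $\Omega$.  Since $B$ is disjoint from $B'$ and its endpoints lie on that segment, $B\subset\overline\Omega$.  On the other hand $R_1$ starts on the left wall, which lies outside $\Omega$, and $R_1$ only \emph{touches} $B'$ (tangency, not crossing) and never meets the right wall; hence $R_1$ stays outside $\Omega$.  Therefore $B$ cannot touch $R_1$.  This already kills the scheme once a single batch has two curves; no choice of nesting order on the right wall avoids it.

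The paper's construction avoids this obstruction in the only possible way: it starts from a point--line incidence configuration, whose incidence graph is $C_4$-free (two lines meet in at most one point), and turns incidences into tangencies.  The $\Omega(n^{4/3})$ then comes from Szemer\'edi--Trotter being tight, and the delicate part of the proof is routing the \emph{red} curves around one another without creating crossings --- not inventing the combinatorial pattern, which is forced to be $C_4$-free.
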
	

There still remains a polynomial gap between this lower bound and the best known $O(n^{3/2}\log n)$ upper bound. Even getting rid of the $\log n$ from the upper bound would be an interesting improvement. 

To complement this lower bound, we show that for doubly-grounded families this is best possible, provided a conjecture about forbidden $0$-$1$ matrices of Pach and Tardos \cite{Pach2006} holds. The exact statement will be phrased later as Theorem \ref{thm:doubly-grounded}.

The known results about the number of tangencies among two families of pairwise disjoint curves are summarized in Table \ref{table:sum}.

\begin{table}[h]
	\centering
	\begin{tabular}{|c|c|c|c|}
		\hline
		\textbf{Curve type} & \textbf{\# of tangencies} & \textbf{Lower bd ref} & \textbf{Upper bd ref}\\ \hline
		general & $\Omega(n^{4/3})$,  $\tilde O(n^{3/2})$ & Thm.~\ref{thm:main} & Pinchasi \& Ben-Dan \cite{Treml}\\ \hline
		doubly-grounded & Conj.~\ref{conj:01} $\Rightarrow$ $\Theta(n^{4/3})$& Thm.~\ref{thm:main} & Thm.~\ref{thm:doubly-grounded} \\ \hline
		$x$-monotone & $\Theta(n\log n)$ &  Pach et al.~\cite{Treml} & Thm.~\ref{thm:xmon} \\ \hline		
		$\le 1$-intersecting & $\Theta(n)$ & trivial & Ackerman et nos~\cite{ackerman2021tangencies}\\ \hline	
		convex regions & $\Theta(n)$ & trivial & Pach et al.~\cite{Treml}\\ \hline			
	\end{tabular}
	\caption{Summary of results for two families of disjoint curves.}
	\label{table:sum}
\end{table}

\bigskip
A related question is to study the number of tangencies in a single family of curves.
If the curves in the family are not allowed to cross, i.e., they are disjoint apart from tangencies, then it follows from Euler's formula that the number of tangencies is at most $3n-6$ among $n\ge 3$ curves \cite{erdosgrunbaum}.
While settling a conjecture of Richter and Thomassen \cite{rt}, it was shown by Pach, Rubin and Tardos \cite{prt1,prt2} that if the number of tangencies is superlinear in the number of curves, then the number of crossings is superlinear in the number of touchings.

According to a conjecture of Pach \cite{pachpc} the number of tangencies between an at most $1$-intersecting family of $n$ curves is $O(n)$ if every pair of curves intersects.
The conjecture is known to hold for pseudo-circles \cite{ANPPSS},
and in the special case when there exist constantly many faces of the arrangement of the curves such that every curve has one of its endpoints inside one of these faces \cite{GYORGYI201829}. 

The number of tangencies within an at most $1$-intersecting family of $n$ curves can be $\Omega(n^{4/3})$, even if all curves are required to be $x$-monotone; this follows from a famous construction of Erd\H{o}s and Purdy \cite{erdoscombgeo} of $n$ points and $n$ lines that determine $\Omega(n^{4/3})$ point-line incidences by replacing each point with a small curve, and slightly perturbing the lines; see \cite{pachbook}.
For $x$-monotone curves, it is mentioned in \cite{ackerman2021tangencies} that an almost matching upper bound of $O(n^{4/3}\log^{2/3}n)$ follows from a result of Pach and Sharir~\cite{PS91} which can be improved with a more careful analysis to $O(n^{4/3}\log^{1/3}n)$.\footnote{Personal communication with Eyal Ackerman, who observed this with Rom Pinchasi.}
For further results, see \cite{maya,esz,erdosgrunbaum,salazar}.
In particular, in \cite{maya} it was shown that any family of $n$ at most $t$-intersecting curves determines at most $O(n^{2-\frac1{3t+15}})$ tangencies.
We improve this bound as follows.

\begin{thm}\label{thm:onefamily}
	Any family of $n$ at most $t$-intersecting curves determines at most $O(n^{2-\frac1{t+3}})$ tangencies.
\end{thm}

For a family of curves its \emph{tangency graph} is defined as the graph which has a vertex for each curve and two vertices are connected if and only if the corresponding curves are tangent to each other. We actually prove that for some large enough $c=c(t)$ the tangency graph of any family of at most $t$-intersecting curves avoids $K_{t+3,c}$ as a subgraph. Theorem \ref{thm:onefamily} follows from this directly using the K\H ov\'ari--S\'os--Tur\'an theorem. 

\subsection{Technical definitions}
All curves considered in this paper are \emph{simple} and planar, i.e., each is the image of an injective continuous function from an (open or closed) interval into the plane, also known as a Jordan arc. 

For simplicity, we assume that any two curves intersect in a finite number of points, and that no three curves intersect in the same point.
An intersection point $p$ of two curves is a \emph{crossing} point if there is a Jordan region $D$ which contains $p$ but no other intersection point of these two curves in its interior, each curve intersects the boundary of $D$ at exactly two points, and in the cyclic order of these four points no two consecutive points belong to the same curve. 
We say that two curves \emph{touch} each other at a \emph{tangency} point $p$ if both of them contain $p$ in their relative interior, $p$ is their only intersection point, and $p$ is not a crossing point.
A family of curves is \emph{at most $t$-intersecting} if every pair of curves intersects in at most $t$ points.
Note that without the $t$-intersecting condition (which in this paper we do not require when studying tangencies among two families), when counting tangencies, it makes no difference whether we consider curves or Jordan regions, so our results also hold when each of the two families consist of disjoint Jordan regions.


\section{Upper bound for a single family of curves}

\begin{lem}\label{lem:forb}
	For every positive integer $t$ there exists a constant $c=c(t)$ such that given a family of at most $t$-intersecting curves, the tangency graph of the curves cannot contain $K_{t+3,c}$ as a subgraph.
\end{lem}

\begin{proof}
	Suppose for a contradiction that there is family of curves such its tangency graph contains $K_{t+3,c}$ as a subgraph. Call the $t+3$ curves that form one part of this bipartite graph \emph{red}, and the $c$ curves that form the other part of the bipartite graph \emph{blue}. Each red curve is cut by the other $t+2$ red curves into at most $1+(t+2)t$ parts, and by the pigeonhole principle there are at least $c_2=c/(1+(t+2)t)^{t+3}$ blue curves that intersect the same part from each red curve. Choosing these red parts, from now on we assume that we have a family of red and blue curves whose tangency graph contains a $K_{t+3,c_2}$ and the red curves are pairwise disjoint.
	Moreover, at least $c_3=c_2/(t+3)!$ blue curves touch these $t+3$ red curves in the same order, where we consider each curve as a continuous image of $[0,1]$.
	
	Denote the red curves by $\gamma_1,\ldots,\gamma_{t+3}$ in the order in which they are touched by the two blue curves. 	
	At least $c_4=c_3/2^{t+1}$ blue curves turn in the same direction at each of their $t+1$ tangency points with the red curves $\gamma_2,\ldots,\gamma_{t+2}$, i.e., all red curves that are not the first or last that they touch.
	We choose $c$ such that $c_4\ge 2$.
	
	Going from $\gamma_1$ to $\gamma_2$, and then from $\gamma_2$ to $\gamma_3$, at $\gamma_2$ both blue curves either turn left, or right. See Figure \ref{fig:gamma123}.
	Then either those parts of these two blue curves intersect that go from $\gamma_1$ to $\gamma_2$, or the $\gamma_1$ to $\gamma_2$ part of one curve intersects the $\gamma_2$ to $\gamma_3$ part of the other curve.
	In either case, we can delete $\gamma_1$, and proceed by induction to get $t+1$ intersections between the two blue curves, contradicting our assumption.
\end{proof}

\begin{figure}[h]
	\centering
	\includegraphics[width=12cm]{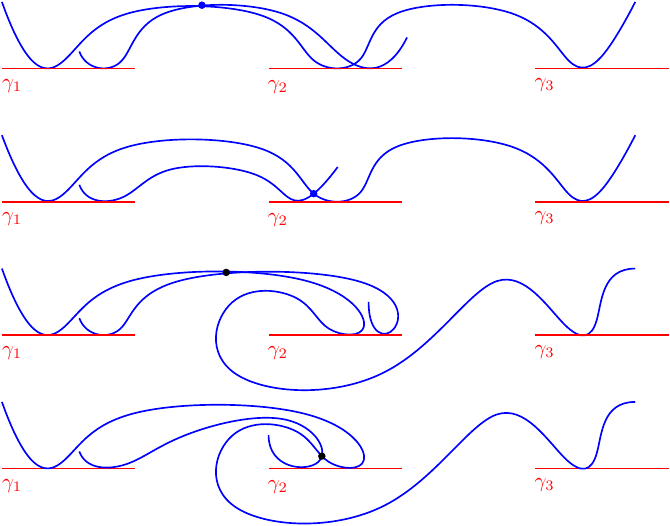}
	\caption{The two possible turns at $\gamma_2$ and the respective two possible intersections among the two blue curves. Note that it does not matter in which order or from which side they touch the red segments as those could also be replaced by red disks without changing the combinatorial layout of the tangencies.}
	\label{fig:gamma123}
\end{figure}

\begin{proof}[Proof of Theorem \ref{thm:onefamily}.]
	The K\H ov\'ari--S\'os--Tur\'an theorem \cite{kst} concerning the well-known Zarankiewicz problem states that if a bipartite graph $G$ on $n+n$ vertices does not contain $K_{a,b}$ as a subgraph, 
	then it has $O(n^{2-1/a})$ edges. By Lemma \ref{lem:forb} the tangency graph does not contain $K_{a,b}$ as a subgraph for $a=t+3$ and $b=c$ and thus this theorem implies that it has	$O(n^{2-\frac1{t+3}})$ edges.
\end{proof}

\section{Upper bound for the $x$-monotone case}

For the proof, we will use the following result from \cite{edgeordered}.

\begin{thm}[Gerbner et al.~\cite{edgeordered}]\label{thm:edgeordered}
	A graph with a total ordering $<$ on its edges such that it does not contain as a subgraph a path on $5$ vertices, $\{a,b,c,d,e\}$, whose edges are ordered as $ab<cd<bc<de$, can have at most $O(n\log n)$ edges.
\end{thm}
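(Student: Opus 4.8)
The plan is to prove the stronger structural statement that any graph $G$ on $n$ vertices whose edges carry a total order avoiding the configuration $ab<cd<bc<de$ (on a path $a,b,c,d,e$) is $O(\log n)$-\emph{degenerate}, i.e.\ every subgraph of it contains a vertex of degree $O(\log n)$. This immediately yields the theorem: repeatedly deleting a vertex of degree at most $c\log k$ from the current graph on $k$ vertices (the restriction of the edge order to a subgraph still avoids the configuration, so the hypothesis applies at every step) accounts for all edges, and $\sum_{k=2}^{n} c\log k \le c\,n\log n$. Hence the whole difficulty is concentrated in the following \emph{forcing lemma}: if every vertex of $G$ has degree larger than $c\log n$ for a suitable absolute constant $c$, then $G$ contains the forbidden path. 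I would first verify that this threshold is the right order of magnitude: the $\Omega(n\log n)$ lower-bound constructions for this problem have minimum degree $\Theta(\log n)$, so one cannot hope to force the pattern below $\Theta(\log n)$, and matching this is exactly what produces the logarithmic factor in the final bound.

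To prove the forcing lemma I would process the edges in increasing order and track, for a candidate middle edge $bc$, the two smaller edges hanging off its endpoints. Call a path $a\!-\!b\!-\!c\!-\!d$ on four distinct vertices \emph{armed} if its three edges satisfy $ab<cd<bc$; the point of this definition is that an armed path is one step away from the forbidden configuration, since any edge $de$ with $de>bc$ whose far endpoint $e$ avoids $\{a,b,c\}$ completes the pattern. Thus it suffices to locate an armed path whose threshold value $bc$ is small enough that its endpoint $d$ still has strictly more than three incident edges exceeding $bc$; then at least one of those edges reaches a vertex outside $\{a,b,c\}$ and we are done. With minimum degree above $c\log n$, every vertex has an abundance of incident edges at every scale of the order, and the task reduces to assembling the nested triple $ab<cd<bc$ greedily while keeping $bc$ low.

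The logarithm is expected to enter through a recursive dyadic halving of the edge order: partitioning the sorted edge sequence into $O(\log n)$ scales, I would argue by pigeonhole over scales that a vertex of large degree must participate in the nested increasing triple $ab<cd<bc$ at a controlled scale, leaving room above $bc$ for the final edge $de$. The main obstacle is making this assembly rigorous, and it has two delicate points. First is the distinctness of the five vertices $a,b,c,d,e$: the greedy and pigeonhole choices can collide, and one must either rule out collisions directly or absorb them into the constant $c$ by reserving a constant surplus of degree at each step. Second, and more seriously, one must show that the nested triple can genuinely be forced with its top value $bc$ kept small; this is the quantitative heart of the argument and the step where the precise $c\log n$ threshold is won or lost. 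An alternative route that avoids degeneracy entirely is a direct divide-and-conquer on the sorted edge sequence, charging each copy of the pattern to the dyadic node at which its four edges first separate; this also produces the $\log n$ factor, but it faces the same essential difficulty of controlling the cross-configurations at each node.
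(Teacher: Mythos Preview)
This theorem is not proved in the paper: it is quoted from Gerbner et al.\ and invoked as a black box in the proof of Theorem~\ref{thm:xmon}. There is therefore no in-paper argument to compare your proposal against.

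On its own terms, your proposal is not yet a proof. You reduce the statement to a forcing lemma---minimum degree above $c\log n$ forces the pattern---and then explicitly defer the proof of that lemma, calling it ``the quantitative heart of the argument'' and the step ``where the precise $c\log n$ threshold is won or lost.'' The dyadic-scale pigeonhole you describe does not close this gap: saying that ``a vertex of large degree must participate in the nested increasing triple $ab<cd<bc$ at a controlled scale'' is a restatement of what has to be shown, not an argument for it. Pigeonhole over $O(\log n)$ scales at a single vertex $b$ only yields many incident edges within one scale; it does not by itself manufacture the alternating nesting $ab<cd<bc$ across \emph{two} adjacent vertices $b,c$ while still leaving room above $bc$ at $d$. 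That coordination between $b$, $c$, and $d$ is precisely the content of the theorem, and your sketch does not supply it. Your alternative divide-and-conquer route is described at the same level of vagueness and, as you yourself note, faces the same essential difficulty. Finally, the remark that the lower-bound constructions ``have minimum degree $\Theta(\log n)$'' is being used to calibrate the target, but it is asserted rather than checked; a priori those constructions could have a few high-degree vertices and many low-degree ones, and you would need the standard pass-to-a-subgraph-of-high-minimum-degree argument to justify the claim.
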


\begin{proof}[Proof of Theorem \ref{thm:xmon}]
Let $\cS$ be the family of $n$ red and blue $x$-monotone curves. After a slight perturbation we can assume that no two tangencies (as points in the plane) have the same $x$-coordinate. Let $G$ be the graph whose vertices correspond to the curves and whose edges correspond to the tangent pairs. Let $G_1$ (resp. $G_2$) be the subgraph where we take an edge only if in a small neighborhood of the corresponding tangency the red curve is above (resp. below) the blue curve. Note that $G$ is the edge-disjoint union of $G_1$ and $G_2$. We will show that the edges of $G_1$ can be ordered in such a way that it will satisfy the conditions of Theorem \ref{thm:edgeordered}, thus $G_1$ has $O(n\log n)$ edges. Since the same holds for $G_2$ due to symmetry, in turn this implies the same for $G$ itself, proving the theorem.

\begin{figure}[h]
	\centering
	\includegraphics[width=\textwidth]{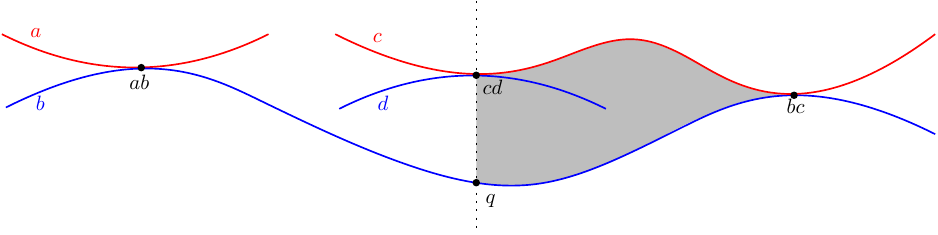}
	\caption{It is impossible to realize a certain path on $5$ vertices in $G_1$.}
	\label{fig:xmon}
\end{figure}

Order the edges of $G_1$ according to the $x$-coordinate of the corresponding tangencies. This gives a complete ordering of the edges. We need to show that $G_1$ cannot contain a path on $5$ vertices, $\{a,b,c,d,e\}$, such that $ab<cd<bc<de$ in the order of the edges (see Figure \ref{fig:xmon}).  Indeed, assume on the contrary. Without loss of generality, we can assume that $a$ corresponds to a red curve. Note that as $ab$ (i.e., the corresponding tangency) is left to $cd$ and $cd$ is left to $bc$, the curve $b$ must intersect the vertical line passing through $cd$ in a point $q$. Moreover, as $b$ touches $c$ from below (in $bc$), $q$ must be also below $cd$. This means that the part of the curve $d$ that lies to the right from $cd$ must be in the closed region determined by the part of $c$ between $cd$ and $bc$, the part of $b$ between $q$ and $bc$, and the vertical segment between $cd$ and $q$. This implies that $d$ has no point to the right from $bc$, contradicting that $bc<de$. 
\end{proof}

\section{Lower bound for the general case}

\begin{proof}[Proof of Theorem \ref{thm:main}]

Take a construction with $n$ lines, $l_1,\ldots,l_n$, and $n$ points, $p_1,\ldots,p_n$, that has $\Omega(n^{4/3})$ incidences, using a famous construction of Erd\H{o}s and Purdy \cite{erdoscombgeo}, which is sharp, as shown by the Szemer\'edi--Trotter theorem \cite{SzT1983}.
We can suppose that none of the $n$ lines is vertical or horizontal, and no two points form a vertical line.
Take a large axis-parallel box $B$ such that all crossings lie within $B$ and all lines intersect the left and right sides of $B$.
We will convert the lines into red curves, and the points into blue curves, such that each incidence will become a tangency.
Each point $p_j$ is replaced by a small blue circle $D_j$ whose bottommost point is $p_j$ and a segment going upwards from the top of the circle to the top side of $B$ ( we can delete a small arc of the circle adjacent to its top so that together with the segment they form a simple curve, as required).

\begin{figure}[h]
	\centering
	\includegraphics[width=\textwidth]{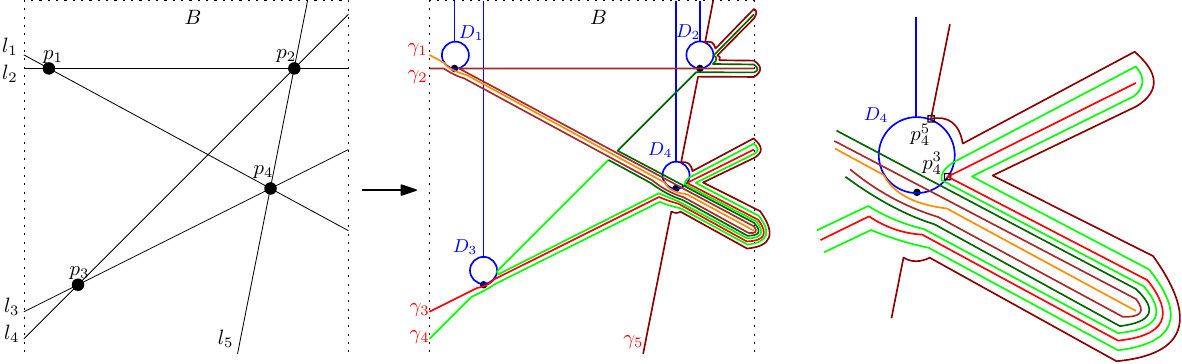}
	\caption{The lower bound construction, `red' curves are drawn with various colors and $\gamma_4$ is drawn with two shades of green to ease readability. The right side figure is an enlargement of the part of the redrawing in the vicinity of $D_4$.}
	\label{fig:sztconstr}
\end{figure}

We will place the red curves, $\gamma_1,\ldots,\gamma_n$, one-by-one in the order given by the slopes of the corresponding lines, i.e., the order in which they intersect the left side of $B$, starting with the topmost one (see Figure \ref{fig:sztconstr}).
The two ends of $\gamma_i$ will be the intersection points of the sides of $B$ with $l_i$.
We will also maintain the following.
For a given line $l_i$ the corresponding red curve $\gamma_i$ lies in the closed halfplane whose boundary is $l_i$ and lies below $l_i$. 
The curves never cross the left, top or bottom side of $B$ but they may go slightly out at the right side. The curve $\gamma_i$ has parts that lie on $l_i$. Whenever $\gamma_i$ leaves $l_i$ (when going from left to right) at some point $x\in l_i$, it later returns to $l_i$ at a point $x'$ very close to and a bit to the right from $x$. Between $x$ and $x'$, all points of $\gamma_i$ lie below $l_i$ and have an $x$-coordinate at least as big as the $x$-coordinate of $x$. The distance between $x$ and $x'$ can be chosen to be at most the diameter of the small blue circles.

In general, when we draw the next curve $\gamma_i$, it starts at the intersection point of $l_i$ and the left side of $B$, and starts to follow $l_i$ to the right.\\

Whenever $l_i$ is incident to some point $p_j$, it intersects the disk $D_j$ in two points, one of which is $p_j$; denote the other intersection point by $p_j^i$.
Since the lines were ordered by their slopes, the points $p_j^i$ (which exist) follow each other in a counterclockwise order as $i$ increases.
We draw the curve $\gamma_i$ so that it touches $D_j$ at $p_j^i$.\\
If the slope of the line $l_i$ is negative, then after the touching point $p_j^i$ the curve $\gamma_i$ leaves $l_i$ and goes slightly outside of the disk along the $p_j^ip_j$ arc.
It returns to $l_i$ slightly after $p_j$, and continues along $l_i$.\\
If the slope of the line is positive, then the curve $\gamma_i$ leaves $l_i$ slightly before $p_j$ and goes slightly outside of the disk along the $p_jp_j^i$ arc, then returns to $l_i$ at $p_j^i$, and continues along $l_i$.
However, if there is some $h<i$ for which $l_h$ is incident to $p_i$ and also has a positive slope, then instead of going alongside the arc, $\gamma_i$ might be forced to make a much larger detour; the description of this detour will be detailed in the next paragraph.
The only important part for us here is that whenever the first time $\gamma_i$ returns from the detour so that it is not locally separated from $D_j$ anymore, it touches $D_j$ at $p_j^i$, and continues along $l_i$.\\

Whenever $\gamma_i$ would intersect another red curve $\gamma_h$, $h<i$, at some point $x$, instead of creating an intersection, using that we ordered the lines by their slopes, we follow $\gamma_h$, going very close to it, until we reach the endpoint of $\gamma_h$ on the right side of $B$. Then we go back on the other side of $\gamma_h$, again very close to it, until we arrive back at some point $x'\in \gamma_i\cap l_i$ in a close vicinity of $x$.

Notice that if a line $l_i$ goes through some $p_j$, then $\gamma_i$ will go in the above defined way one-by-one around every curve drawn earlier that touches $D_j$ and only then will it touch $D_j$ at $p_j^i$.

In case of going around some $\gamma_h$, the part of $\gamma_h$ which $\gamma_i$ went around lies below $l_h$ and to the right of $l_i\cap l_h$, thus also below $l_i$.
This maintains the condition for $\gamma_i$ as well, and it 
 also implies that while going around some $\gamma_h$, $\gamma_i$ can never intersect any disk $D_j$ that it is supposed to touch.

To see that this construction can be made doubly-grounded, we can easily deform it to lie in a vertical strip.
We can transform the left side of $B$ into a segment on the left side of the strip, and the top side of $B$ into a segment on the right side of the strip.
\end{proof}

\section{Conditional upper bound for the doubly-grounded case}

We first define a certain redrawing of the curves to make some arguments easier (see Figure \ref{fig:redrawing}).
For each curve $\gamma$ we choose an arbitrary point on it, $p_\gamma$. Then, we draw a star-shaped curve $\gamma^*$ 
such that $p_\gamma\in \gamma^*$ will be a special point in $\gamma^*$, called the center, and for each tangency $x$ of $\gamma$ and some other curve of the family, we draw an arc 
from $p_\gamma$ to $x$ that goes very close to the original curve. We repeat this for every curve transforming each into such a star, while preserving disjointnesses and tangencies. 

\begin{figure}[h]
	\centering
	\includegraphics[width=\textwidth]{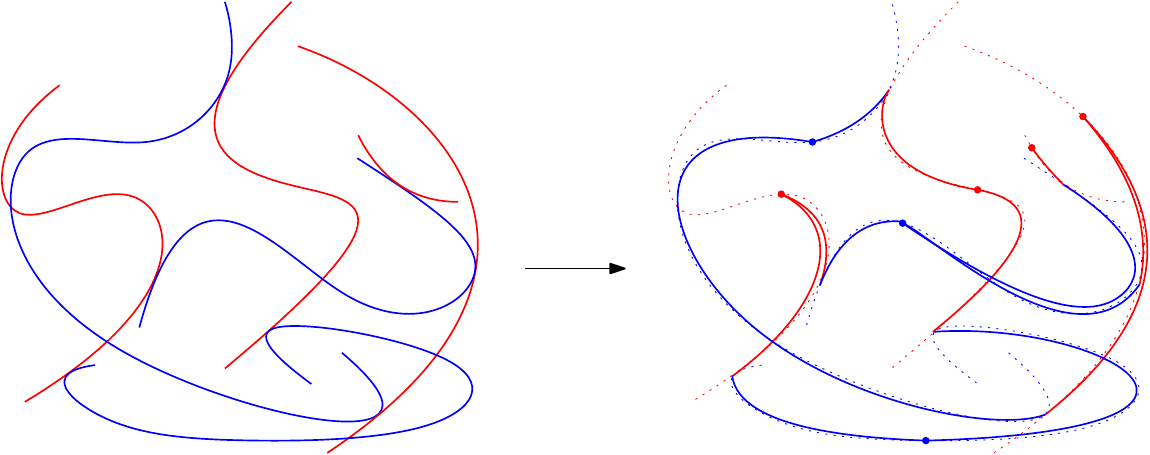}
	\caption{Drawing of a graph based on a set of curves.}
	\label{fig:redrawing}
\end{figure}

This drawing can be regarded as a drawing $D$ of a bipartite graph $G$ in the plane, whose vertices correspond to the original curves, represented by the centers of the stars, and whose edges correspond to the tangencies.
Note that each edge consists of one red and one blue part; we refer to these as \emph{edge-parts}.
Thus, the number of edges of $G$ is exactly the number of tangencies in the original setting.
Also, if two vertices are adjacent in $G$, then none of their neighboring edge-parts are allowed to intersect.
Indeed, if $uv$ is an edge, then the star-shaped curves that correspond to $u$ and $v$ can only meet at their point of tangency.
Nor are two edge-parts of the same color allowed to intersect.
These observations immediately imply the following.


\begin{claim}\label{claim:p2}
	The drawing $D$ of $G$ obtained by the above redrawing process cannot contain a self-crossing path on five edge-parts.
	Therefore, it can contain neither a self-crossing $C_4$, nor 
	a self-crossing path on two edges. 
\end{claim}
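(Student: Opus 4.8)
The plan is to prove the slightly stronger statement that no two edge-parts of any path on five edge-parts in $D$ actually cross; the two ``therefore'' consequences then follow with little extra work.

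First recall the two facts recorded just before the statement: two edge-parts of the same colour never cross, and two edge-parts incident to two adjacent vertices of $G$ never cross. Recall also that $G$ is bipartite, with colour classes the red and the blue curves (two curves of the same colour are disjoint, hence never tangent), and that the colour of an edge-part equals the colour of the unique vertex of $G$ it is incident to, because an edge-part incident to a vertex $v$ is an arc of the star $\gamma^*$ drawn around (the curve of) $v$.

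Now let $P$ be a path on five edge-parts in $D$. Traversed as a curve, $P$ meets star-centres (i.e., vertices of $G$) and tangency points alternately, so it meets exactly three vertices of $G$; call them $c_0,c_1,c_2$ in the order $P$ encounters them. Since the endpoints of every edge-part are a star-centre and a tangency point, and all these points lie among the six points $P$ passes through, every edge-part of $P$ is incident to one of $c_0,c_1,c_2$. Moreover, between $c_k$ and $c_{k+1}$ the path passes through a single tangency point, which lies on exactly two curves and hence must be the tangency of the curves of $c_k$ and $c_{k+1}$; therefore $c_kc_{k+1}\in E(G)$ and $c_k,c_{k+1}$ receive different colours, so in particular $c_0$ and $c_2$ have the same colour. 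Take any two edge-parts $e,e'$ of $P$, incident to $c_i$ and $c_j$ respectively. If $\{c_i,c_j\}\subseteq\{c_0,c_2\}$ (in particular if $c_i=c_j$), then $e$ and $e'$ have the same colour, so they do not cross; otherwise $\{c_i,c_j\}$ equals $\{c_0,c_1\}$ or $\{c_1,c_2\}$, so $c_i$ and $c_j$ are adjacent in $G$ and again $e$ and $e'$ do not cross. These cases are exhaustive, so $P$ is not self-crossing.

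Finally, a self-crossing path on two edges would be a self-crossing path on four ($\le 5$) edge-parts, which the same argument rules out (simply discard the last edge-part). A $C_4$ in $G$ yields a closed curve consisting of eight edge-parts in cyclic order; any two of them lie at cyclic distance at most four, hence occur among five cyclically consecutive edge-parts, which form a path on five edge-parts (a simple path, since $5<8$); so no two of the eight edge-parts cross, and the $C_4$ is not self-crossing. The one delicate point in all of this is the structural claim about $P$ — that it meets exactly three vertices of $G$ with consecutive ones adjacent, and that every edge-part is incident to one of these three — which has to be argued uniformly whether $P$ begins and ends at vertices or at tangency points; once that is in place, everything else is a short finite case check.
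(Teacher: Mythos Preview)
Your argument is correct and is exactly the paper's (implicit) approach: the paper simply records the two observations---same-colour edge-parts never cross, and edge-parts incident to adjacent vertices never cross---and declares the claim immediate, while you spell out the short case analysis they leave to the reader. One cosmetic slip: your dichotomy as written omits the case $c_i=c_j=c_1$ (the singleton $\{c_1\}$ is neither a subset of $\{c_0,c_2\}$ nor equal to $\{c_0,c_1\}$ or $\{c_1,c_2\}$), but that case is also same-colour and harmless.
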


\begin{proof}
	Without loss of generality, suppose that the five edge-parts of a path are red, red, blue, blue, red.	
	Denote the vertex between the first two red parts by $u$, the vertex between the two blue parts by $v$, and the vertex incident to the last red part by $w$.
	Recall that edge-parts of the same color do not intersect.
	Both blue edge-parts are incident to $v$, so they cannot intersect the edge-parts incident to a vertex adjacent to $v$.
	But each red edge-part is incident to either $u$ or $w$, which are both adjacent to $v$.
	Therefore, the path cannot be self-crossing.
\end{proof}


%
%

In \cite{marcustardosseq} it was proved that a graph $G$ which can be drawn without a self-crossing $C_4$ has at most $O(n^{3/2}\log n)$ edges. As the number of edges of $G$ equals the number of tangencies between curves, this implies the upper bound of $O(n^{3/2}\log n)$ on the number of tangencies between the curves that was mentioned in the introduction. Now we use this representation to prove our better upper bound in case the curves are doubly-grounded.

\begin{figure}[h]
	\centering
	\includegraphics[width=\textwidth]{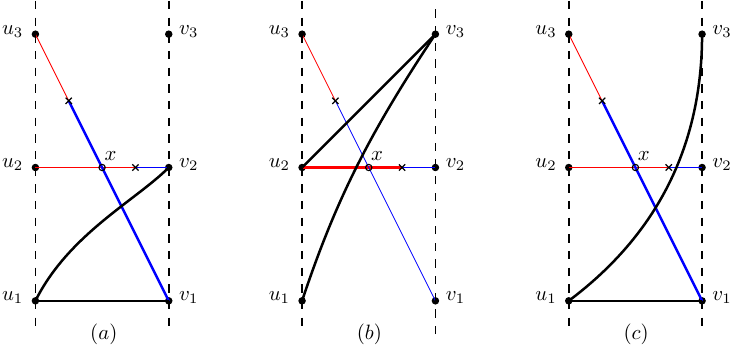}
	\caption{In every case there exists a crossing path on 5 edge-parts.}
	\label{fig:doubly}
\end{figure}

Our main result of this section shows an upper bound matching the lower bound for doubly-grounded curves, assuming a conjecture about forbidden $0$-$1$ matrices. For more details about the following definitions see, e.g., \cite{Pach2006}. Briefly, a $0$-$1$ matrix $M$ is said to contain another $0$-$1$ matrix $P$ if there is a subset of rows and columns of $M$ such that in the intersection of these rows and columns there is a matrix $M'$ such that we can get $P$ from $M'$ by possibly replacing some $1$-entries by $0$-entries.\footnote{Note that the order of the rows and columns is fixed in both $M$ and $P$.} A $0$-$1$ matrix is a cycle if it is the adjacency matrix of a cycle graph. We omit the definition of positive orthogonal cycles in general, we just say that positive orthogonal $6$-cycles are those $3\times 3$ matrices that are $6$-cycles and their middle entry is a $1$-entry.

Pach and Tardos \cite{Pach2006} proved the following.

\begin{thm}\label{thm:01}
	If a $0$-$1$ matrix avoids all positive orthogonal cycles, then it has $O(n^{4/3})$ $1$-entries.
\end{thm}

In fact, it may be enough to forbid only positive orthogonal $6$-cycles to achieve the same bound, as mentioned in \cite{Pach2006}.

\begin{conj}\label{conj:01}
	If a $0$-$1$ matrix avoids all positive orthogonal $6$-cycles, then it has $O(n^{4/3})$ $1$-entries.
\end{conj}

Actually, it may be enough to forbid only one $6$-cycle and it may not even be necessary that it is a positive orthogonal cycle, as conjectured in \cite{gyoric6} and \cite{Methuku2022}.\footnote{In \cite{gyoric6} they show for various subsets of the $6$-cycles that forbidding them implies the $O(n^{4/3})$ upper bound on the edges, but unfortunately all of these subsets contain non-positive cycles as well, so they do not imply Conjecture \ref{conj:01}.}

\begin{conj}\label{conj:01strong}
	If a $0$-$1$ matrix avoids a given $6$-cycle, then it has $O(n^{4/3})$ $1$-entries.
\end{conj}

We show that the weaker one of these two conjectures would already imply that our lower bound for doubly-grounded curves is optimal.

\begin{thm}\label{thm:doubly-grounded}
	Given a family of $n$ red and blue curves that lie within a vertical strip such that no two curves of the same color intersect, every red curve touches the left boundary of the strip and every blue curve touches the right boundary of the strip, then the number of tangencies between the curves is $O(n^{4/3})$ if Conjecture \ref{conj:01} holds.
\end{thm}

\begin{proof}
	Given a doubly-grounded family of curves, for each curve $\gamma$ choose the point where it touches the appropriate boundary of the vertical strip to be $p_\gamma$, and do the redrawing defined above. We get a bipartite graph $G$ on $n$ vertices whose vertices lie on the two boundaries of the strip and whose edges are drawn inside the strip. 
	
	We have seen that this graph $G$ has no self-crossing $C_4$.\footnote{In case of a doubly-grounded family, this is trivial.}
	As in a doubly-grounded representation any $C_4$ is self-crossing, this already implies that $G$ is $C_4$-free, thus has at most $O(n^{3/2})$ edges.	
	Now we prove that additionally it does not contain a certain type of $C_6$ that we call \emph{positive}, following \cite{Pach2006}.
	A $C_6$ on vertices $u_1,u_2,u_3,v_1,v_2,v_3$ is positive if $u_1,u_2,u_3$ are on the left side of the strip in this order, $v_1,v_2,v_3$ are on the right side in this order, and $u_2v_2$ is an edge of the $C_6$.\footnote{In fact, it would be sufficient to assume that $u_2v_2$ is an edge of $G$, as since there is no self-crossing $C_4$, this can only happen if $u_2v_2$ is an edge of the $C_6$.}
	
	Assume on the contrary that such a positive $C_6$ exists. 
	Recall that each edge $u_iv_j$ has a red part incident to $u_i$ and a blue part incident to $v_j$, and that no parts of the same color can cross.
	
	Without loss of generality, we can assume that $u_3v_1$ is an edge of the $C_6$.
	The edges $u_2v_2$ and $u_3v_1$ must cross, denote (one of) their crossing point(s) by $x$.
	Without loss of generality, we can assume that the red part of $u_2v_2$ crosses the blue part of $u_3v_1$.
	This implies that $u_2v_1$ cannot be an edge, as otherwise $xv_1u_2x$ would be a self-crossing path on 4 edge-parts contradicting Claim \ref{claim:p2}.
	So $u_2v_3$ and $u_1v_1$ need to be edges. 
	
	Then $u_2x$ (contained in the red part of $u_2v_2$) and $xv_1$ (contained in the blue part of $u_3v_1$) separate $u_1$ from $v_2$ and from $v_3$ (within the strip).
	However, at least one of $u_1v_2$ and $u_1v_3$ must be an edge of $G$.\\
	If $u_1v_2$ is an edge, then as it cannot intersect $u_2v_2$, it must intersect $xv_1$, so $v_2u_1v_1x$ is a self-crossing path on 5 edge-parts, see Figure \ref{fig:doubly}(a).\\
	If $u_1v_3$ is an edge, then either $u_1v_3$ intersects $u_2x$ and then $u_1v_3u_2x$ is a self-crossing path on 5 edge-parts, see Figure \ref{fig:doubly}(b); or $u_1v_3$ intersects $xv_1$ and then $v_3u_1v_1x$ is a self-crossing path on 5 edge-parts, see Figure \ref{fig:doubly}(c).\\
	All cases contradict Claim \ref{claim:p2}.

	
	Thus, we have shown that positive $C_6$'s are not in $G$. Take the adjacency $0$-$1$ matrix of the bipartite graph $G$ such that the rows and columns are ordered according to the order of the corresponding points along the boundaries of the strip. Recall that the positive $6$-cycles are the ones in whose $3\times 3$ matrix representation the middle entry is a $1$-entry. Thus, Conjecture \ref{conj:01} implies that $G$ has at most these many edges, which in turn implies the required upper bound on the number of tangencies in the original setting of curves.	
\end{proof}

Theorem \ref{thm:doubly-grounded} implies the following slight extension in a standard way.

\begin{thm}\label{thm:circle-grounded}
	Given a family of $n$ red and blue curves that lie inside a circle such that no two curves of the same color intersect and every curve touches the circle once, then the number of tangencies between the curves is $O(n^{4/3})$ if Conjecture \ref{conj:01} holds.
\end{thm}

\begin{proof}
	We prove by induction that the number of touchings is at most  $dn ^{4/3}$ for a suitable $d$. This certainly holds for small $n$. For a general $n$, by a continuity argument we can split the boundary circle into two arcs such that each arc touches at most $\lceil n/2\rceil$ curves of each color. Then the number of touchings between the red curves (resp.\ blue curves) touching the first arc and the blue curves (resp.\ red curves) touching the second arc is at most $cn^{4/3}$ by Theorem \ref{thm:doubly-grounded}, where the $c$ is hidden in the $O$ notation. The touchings between the red and blue curves touching the same arc is at most $d\lceil n/2\rceil ^{4/3}$ by induction (after deforming the drawing such that the arc becomes a circle). 
	Thus, the total number of touchings is at most $2d\lceil n/2\rceil ^{4/3}+2cn^{4/3}\le dn^{4/3}$ for $d$ large enough (depending on $c$), finishing the proof.
\end{proof}

We have seen that both in the general case (see the paragraph after the proof of Claim \ref{claim:p2}) and in the doubly-grounded case (see Theorem \ref{thm:doubly-grounded}) the best known upper bounds use only Claim \ref{claim:p2}. Besides improving the lower and upper bounds in the general case, it would be interesting to decide if it is possible to improve the general upper bound using only Claim \ref{claim:p2}. Also, this connection makes it even more interesting to solve Conjectures \ref{conj:01} and \ref{conj:01strong}. Note that any upper bound for the respective problems that is better than $\tilde O(n^{3/2})$ would also improve our bound on the doubly-grounded case.

\smallskip
\noindent{\bf Acknowledgment}
We thank Eyal Ackerman and J\'anos Pach for discussions about these problems, especially on Theorem \ref{thm:onefamily}.

\footnotesize
\bibliographystyle{plainurl}
\bibliography{psdisk}

\end{document}